\newcommand{\urlwofont}[1]{\urlstyle{same}\url{#1}}
\newcommand{\nc}{\newcommand}
\nc{\nt}{\newtheorem}
\nc{\dmo}{\DeclareMathOperator}
\theoremstyle{plain}
\newtheorem{theorem}{Theorem}[section]
\newtheorem{maintheorem}{Theorem}
\newtheorem{prop}[theorem]{Proposition}
\newtheorem{lemma}[theorem]{Lemma}
\theoremstyle{definition}
\theoremstyle{remark}
\dmo{\SMod}{SMod}
\dmo{\PMod}{PMod}
\dmo{\SHomeo}{SHomeo}
\dmo{\SI}{\mathcal{SI}}
\dmo{\SSp}{SSp}
\dmo{\PSp}{PSp}
\newcommand\Z{\ensuremath{\mathbb{Z}}}
\newcommand\N{\ensuremath{\mathbb{N}}}
\nc{\p}[1]{\noindent {\bf #1.}}
\nc{\margin}[1]{\marginpar{\scriptsize #1}}
\nc{\PartialIBases}{\mathfrak{IB}}
\nc{\PartialIBasesEx}{\widehat{\mathfrak{IB}}}
\nc{\PartialBases}{\mathfrak{B}}
\nc{\Building}{\mathfrak{T}}
\nc{\height}{\ensuremath{\text{ht}}}
\nc{\Poset}{\mathfrak{P}}
\nc{\Field}{\mathbb{F}}
\nc{\Link}{\ensuremath{\text{Link}}}
\nc{\Star}{\ensuremath{\text{Star}}}
\nc{\Aut}{\ensuremath{\text{Aut}}}
\nc{\SymTorelli}{\ensuremath{\mathcal{SI}}}
\nc{\BTorelli}{\ensuremath{\mathcal{BI}}}
\dmo{\Braid}{\ensuremath{B}}
\dmo{\PureBraid}{\ensuremath{PB}}
\nc{\Hyper}{\ensuremath{\iota}}
\nc{\BigFreeProd}{\mathop{\mbox{\Huge{$\ast$}}}}
\nc{\Quotient}{\ensuremath{\mathcal{Q}}}
\nc{\QuotientEx}{\ensuremath{\widehat{\mathcal{Q}}}}
\nc{\Presentation}[2]{\ensuremath{\text{$\langle #1$ $|$ $#2 \rangle$}}}
\nc{\SpGen}{\ensuremath{S_{\text{Sp}}}}
\nc{\SpRel}{\ensuremath{R_{\text{Sp}}}}
\nc{\QGen}{\ensuremath{S_{\mathcal{Q}}}}
\nc{\QRel}{\ensuremath{R_{\mathcal{Q}}}}
\nc{\PBs}{\ensuremath{T}}
\nc{\Qs}{\ensuremath{\overline{s}}}
\dmo{\PB}{PB}
\nc{\BIredg}{\mathcal{BI}_{2g+1}^{\text{red}}}
\nc{\BI}{\mathcal{BI}}
\dmo{\D}{D}
\dmo{\Stab}{Stab}
\dmo{\Surger}{Surger}
\nc{\I}{\mathcal{I}}
\nc{\spanmap}{span}
\nc{\genbygen}[2]{\premonoid{#1}{#2}}
\nc{\premonoid}[2]{#1 \circledcirc #2}
\nc{\monoid}[2]{#1 \odot #2}
\nc{\raag}{A_\Gamma}
\nc{\raagdelt}{A_\Delta}
\nc{\autraag}{\mathrm{Aut}(\raag)}
\nc{\outraag}{\mathrm{Out}(A_\Gamma)}
\nc{\autraagdelt}{\mathrm{Aut}(A_\Delta)}
\nc{\glk}{\mathrm{GL}(k,\mathbb{Z})}
\nc{\gln}{\mathrm{GL}(n,\mathbb{Z})}
\nc{\glkdelt}{\mathrm{GL}(k |\Delta| ,\mathbb{Z})}
\nc{\gldelt}{\mathrm{GL}(|\Delta| ,\mathbb{Z})}
\nc{\zkdelt}{\mathbb{Z}^{k|\Delta|}}
\nc{\aut}{\mathrm{Aut}}
\nc{\out}{\mathrm{Out}}
\nc{\join}{\mathcal{J}}
\nc{\pc}{\mathrm{PC}}
\nc{\lk}{\mathrm{lk}}
\nc{\st}{\mathrm{st}}
\nc{\inn}{\mathrm{Inn}}
\title{\vspace{-30pt} On the number of outer automorphisms of the automorphism group of a right-angled Artin group}
\author{Neil J. Fullarton\vspace{-6pt}}
\begin{document}

\newcounter{enumi_saved}

\maketitle

\begin{abstract}We show that for any natural number $N$ there exists a right-angled Artin group $\raag$ for which $\out(\autraag)$ has order at least $N$. This is in contrast with the cases where $\raag$ is free or free abelian: for all $n$, Dyer-Formanek and Bridson-Vogtmann showed that $\out(\aut(F_n))=1$, while Hua-Reiner showed $|\out(\aut(\Z^n))| \leq 4$. We also prove the analogous theorem for $\out(\out(\raag))$. These theorems fit into a wider context of algebraic rigidity results in geometric group theory.  We establish our results by giving explicit examples; one useful tool is a new class of graphs called \emph{austere graphs}.
\end{abstract}

\maketitle

\section{Overview}A finite simplicial graph $\Gamma$ with vertex set $V$ and edge set $E \subset V \times V$ defines the \emph{right-angled Artin group} $\raag$ via the presentation
\[ \langle v \in V \mid [v,w] = 1 \mbox{ if } (v,w) \in E \rangle . \] The class of right-angled Artin groups contains all finite rank free and free abelian groups, and allows us to interpolate between these two classically well-studied classes of groups.

A centreless group $G$ is \emph{complete} if the natural embedding $\inn(G) \hookrightarrow \aut(G)$ is an isomorphism. Dyer-Formanek \cite{DyerFormanek} showed that $\aut(F_n)$ is complete for $F_n$ a free group of rank $n \geq 2$, giving $\out(\aut(F_n)) = 1$. Bridson-Vogtmann \cite{BridsonVogtmann} later proved this for $n \geq 3$ using geometric methods, and showed that $\out(F_n)$ is also complete, as did Khramtsov \cite{Khramtsov}. Although $\aut(\Z^n) = \gln$ is not complete (its centre is $\Z / 2$), we observe similar behaviour for free abelian groups. Hua-Reiner \cite{HuaReiner} explicitly determined $\out(\gln)$; in particular, $|\out( \gln) | \leq 4$ for all $n$. In other words, for free or free abelian $\raag$, the orders of $\out(\aut(\raag))$ and $\out(\out(\raag))$ are both uniformly bounded above. The main result of this paper is that no such uniform upper bounds exist when $\raag$ ranges over all right-angled Artin groups.

\begin{maintheorem}\label{mt1}For any $N \in \N$, there exists a right-angled Artin group $\raag$ such that $|\out(\aut(\raag))| > N.$ Moreover, we may choose $\raag$ to have trivial or non-trivial centre. \end{maintheorem}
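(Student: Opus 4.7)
The plan is to produce, for each $N$, an explicit right-angled Artin group $A_{\Gamma_N}$ whose automorphism group is as small as possible, so that its outer automorphism group becomes easy to compute and large. The key ingredient will be the class of \emph{austere graphs} advertised in the paper; I expect these to be graphs $\Gamma$ for which Laurence's generating set for $\aut(A_\Gamma)$ contains no transvections (no vertex $v \ne w$ has $\lk(v) \subseteq \st(w)$) and no non-trivial partial conjugations (no pair of vertices forms a \emph{separating intersection of links}, or SIL). Combined with $\aut(\Gamma) = 1$, this would force $\aut(A_\Gamma) = (\Z/2)^{|V(\Gamma)|}$, the finite abelian group of vertex inversions. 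Being abelian, this group satisfies
\[
\out\bigl(\aut(A_\Gamma)\bigr) = \aut\bigl((\Z/2)^{|V(\Gamma)|}\bigr) = \GL_{|V(\Gamma)|}(\mathbb{F}_2),
\]
whose order grows super-exponentially in $|V(\Gamma)|$, comfortably exceeding $N$ once $|V(\Gamma)|$ is large enough.

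The first concrete task is therefore to exhibit an infinite family of austere, asymmetric, SIL-free graphs. I would proceed by explicit construction: start, for instance, with a sufficiently long cycle and decorate it with asymmetric ``tags'' of additional vertices chosen to kill all graph automorphisms while introducing neither dominating pairs nor SILs. Induction on $|V(\Gamma)|$, adding one block at a time and verifying the finite combinatorial conditions by inspection, is a natural approach. This would settle the trivial-centre part of the theorem, since an austere graph cannot contain a universal vertex (such a vertex would dominate every other), so $A_\Gamma$ automatically has trivial centre.

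For the non-trivial-centre case, simply coning off $\Gamma_N$ destroys austereness, so I cannot use the previous construction unchanged. Instead I would take $\Gamma_N = \{v_0\} \ast \Delta_N$, the join of a single new vertex $v_0$ with an austere asymmetric SIL-free graph $\Delta_N$ from the previous step; then $A_{\Gamma_N} = \Z \times A_{\Delta_N}$ has non-trivial centre generated by $v_0$, while $\aut(A_{\Gamma_N})$ can be described as an explicit extension of $\aut(A_{\Delta_N}) = (\Z/2)^N$ by crossed homomorphisms to and from the $\Z$ factor. The outer automorphism group of this extension should still contain a $\GL_N(\mathbb{F}_2)$-like summand coming from the $(\Z/2)^N$ layer, giving the same unbounded growth.

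The main obstacle is the combinatorial construction: exhibiting austere, asymmetric, SIL-free graphs of unbounded size while simultaneously verifying all three conditions. Once such a family is in place, the rest of the argument is essentially automatic, since Laurence's theorem pins down $\aut(A_\Gamma)$ exactly and the $\out$ computation reduces to linear algebra over $\mathbb{F}_2$ (with a controlled extra piece in the non-trivial-centre case).
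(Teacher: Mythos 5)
There is a genuine gap, and it is located at the very first step: your claim that for an austere, asymmetric, SIL-free graph one gets $\aut(\raag) \cong (\Z/2)^{|V(\Gamma)|}$ is false, because it forgets the inner automorphisms. The Laurence--Servatius generating set always contains the partial conjugations; when $\Gamma \setminus \st(v)$ is connected for every $v$, these do not disappear --- each one becomes conjugation by $v$ on all of $\raag$, i.e.\ an inner automorphism. Since an austere graph has no social vertex, $\raag$ is centreless and $\inn(\raag) \cong \raag$ is infinite, so $\aut(\raag)$ is the infinite group $\raag \rtimes (\Z/2)^{|V(\Gamma)|}$, not a finite elementary abelian $2$-group, and the computation $\out(\aut(\raag)) = \GL(|V(\Gamma)|,\mathbb{F}_2)$ collapses. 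What your argument actually establishes (once the inner automorphisms are reinstated) is that $\out(\raag) \cong (\Z/2)^{|V(\Gamma)|}$ and hence $\out(\out(\raag)) \cong \GL(|V(\Gamma)|,\Z_2)$: that is exactly the paper's proof of Theorem~B, not of Theorem~A. Worse, austere graphs are the least promising family for Theorem~A: the paper conjectures that $\aut(\raag)$ is \emph{complete} when $\Gamma$ is austere, i.e.\ that $\out(\aut(\raag))$ is trivial for precisely the groups you propose. The paper's centreless examples instead use graphs that are ``austere with star cuts'' (trivial symmetry, no domination, but where removing $\st(c)$ disconnects $\Gamma$ into many components) with no SILs; there the individual partial conjugations $\gamma_{c,D_j}$ are \emph{not} inner, $\pc(\raag)$ is itself a right-angled Artin group, and inverting single partial conjugations yields roughly $2^{K_c-1}$ non-inner automorphisms of $\aut(\raag)$.

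Your non-trivial-centre sketch is closer to something that can work --- the join of one vertex with a suitable $\Delta$ does give $\aut(\raag) \cong \Z^{|\Delta|} \rtimes [\Z_2 \times \aut(\raagdelt)]$, and the paper's machinery (compatible pairs for split extensions with abelian kernel, plus the computation of the centraliser of the image of the action) extracts a subgroup of order $2^{d-1}$ of $\out(\aut(\raag))$ from sign changes on the lateral transvections. But as written your argument rests on the same false identification $\aut(A_{\Delta_N}) = (\Z/2)^N$, and the assertion that the extension's outer automorphism group ``should still contain a $\GL_N(\mathbb{F}_2)$-like summand'' is not justified: you would need to verify that the proposed maps are automorphisms of the whole split product (compatibility with the action on the kernel) and that they are not inner, neither of which is automatic --- indeed in the paper's analysis the resulting group is elementary abelian of order $2^{d-1}$, not $\GL$-like.
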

We also prove the analogous result regarding the order of $\out(\out(\raag))$.
\begin{maintheorem}\label{mt2}For any $N \in \N$, there exists a right-angled Artin group $\raag$ such that $\out(\out(\raag))$ contains a finite subgroup of order greater than $N$. \end{maintheorem}
Improving upon Theorem B, in joint work with Corey Bregman, we have exhibited right-angled Artin groups $\raag$ for which $\out(\outraag)$ is infinite; this work will appear in a forthcoming paper.

We remark that neither Theorem A nor B follows from the other, since in general, given a quotient $G / N$, the groups $\aut(G/N)$ and $\aut(G)$ may behave very differently.

Many of the groups that arise in geometric group theory display `algebraic rigidity', in the sense that their outer automorphism groups are small. The aforementioned results of Dyer-Formanek \cite{DyerFormanek}, Bridson-Vogtmann \cite{BridsonVogtmann} and Hua-Reiner \cite{HuaReiner} are examples of this phenomenon. Further examples are given by braid groups \cite{DyerGrossman} and many mapping class groups \cite{IvanovMcCarthy}, as these groups have $\Z / 2$ as their outer automorphism groups. Theorems A and B thus fit into a more general framework of the study of algebraic rigidity within geometric group theory.

We prove the three theorems by exhibiting classes of right-angled Artin groups over which the groups in question grow without bound. We introduce the notions of an \emph{austere graph} and a \emph{weakly austere graph} in Sections 2 and 4, respectively. These lead to tractable decompositions of $\aut(\raag)$ and $\out(\raag)$, which then yield numerous members of $\out(\autraag)$ and $\out(\out(\raag))$. Our methods do not obviously yield infinite order elements of $\out(\autraag)$; we discuss this further in Section 5.

\textbf{Outline of paper.} In Section 2, we recall the finite generating set of $\aut(\raag)$ and give the proof of Theorem B. Sections 3 and 4 contain two proofs of Theorem A; first for right-angled Artin groups with non-trivial centre, then for those with trivial centre. In Section 5, we discuss generalisations of this work, including the question of extremal behaviour of $\out(\autraag)$. The potential existence of infinite order members of $\out(\autraag)$ is also discussed in Section 5, along with some difficulties of approaching this question. The Appendix contains a calculation used in the proof of Proposition \ref{splitdecomp}.

\textbf{Acknowledgements.} The author thanks his PhD supervisor Tara Brendle for her guidance and insight, and also thanks Corey Bregman, Ruth Charney and Karen Vogtmann for helpful discussions.  The author is grateful for the hospitality of the Institute for Mathematical Research at Eidgen\"{o}ssische Technische Hochschule Z\"{u}rich, where part of this work was completed. The author also thanks Dan Margalit and an anonymous referee for helpful comments on an earlier draft of this paper. 

\section{Proof of Theorem B} Let $\Gamma$ be a finite simplicial graph with vertex set $V$ and edge set $E \subset V \times V$. We write $\Gamma = (V,E)$. We abuse notation and consider $v \in V$ as both a vertex and a generator of $\raag$. We will also often consider a subset $S \subseteq V$ as the full subgraph of $\Gamma$ which it spans. For a vertex $v \in V$, we define its \emph{link}, $\lk(v)$, to be the set of vertices in $V$ adjacent to $v$, and its \emph{star}, $\st(v)$, to be $\lk(v) \cup \{v\}$.

\textbf{The LS generators.} Laurence \cite{Laurence} and Servatius \cite{Servatius} gave a finite generating set for $\autraag$, which we now recall. We specify the action of the generator on the elements of $V$. If a vertex $v \in V$ is omitted, it is assumed to be fixed. There are four types of generators:
\begin{enumerate} \item \emph{Inversions}, $\iota_v$: for each $v \in V$, $\iota_v$ maps $v$ to $v^{-1}$.
\item \emph{Graph symmetries}, $\phi$: each $\phi \in \aut(\Gamma)$ induces an automorphism of $\raag$, which we also denote by $\phi$, mapping $v \in V$ to $\phi(v)$.
\item \emph{Dominated transvections}, $\tau_{xy}$: for $x,y \in V$, whenever $\lk(y) \subseteq \st(x)$, we write $y \leq x$, and say $y$ is \emph{dominated} by $x$ (see Figure \ref{domination}). In this case, $\tau_{xy}$ is well-defined, and maps $y$ to $yx$. The vertex $x$ may be adjacent to $y$, but it need not be.
\item \emph{Partial conjugations}, $\gamma_{c,D}$: fix $c \in V$, and select a connected component $D$ of $\Gamma \setminus \st(c)$ (see Figure \ref{pcex}). The partial conjugation $\gamma_{c,D}$ maps every $d \in D$ to $cdc^{-1}$.
\end{enumerate}

\begin{figure}
\centering
\begin{subfigure}{.5\textwidth}
  \centering
  \includegraphics[width=2.6in]{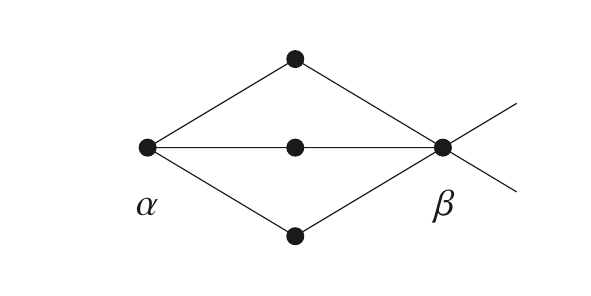}
  \caption{}
    \label{domination}
\end{subfigure}%
\begin{subfigure}{.5\textwidth}
  \centering
  \includegraphics[width=2.6in]{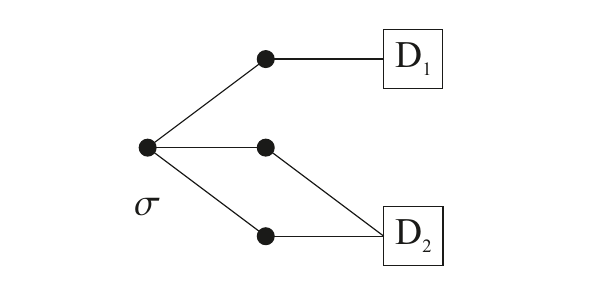}
  \caption{}
    \label{pcex}
\end{subfigure}
\caption{\textbf{\emph{(a)}} The local picture of a vertex $\alpha$ being dominated by a vertex $\beta$. \textbf{\emph{(b)}} Removing the star of the vertex $\sigma$ leaves two connected components, $\mathrm{D}_1$ and $\mathrm{D}_2$.}
\end{figure}
We refer to the generators on this list as the \emph{LS generators} of $\autraag$.

\textbf{Austere graphs.} We say that a graph $\Gamma = (V,E)$ is \emph{austere} if it has trivial symmetry group, no dominated vertices, and for each $v \in V$, the graph $\Gamma \setminus \st (v)$ is connected. We use examples of austere graphs to prove Theorem B.
 \begin{figure}[h!]
    \centering
    \includegraphics[width=2.6in]{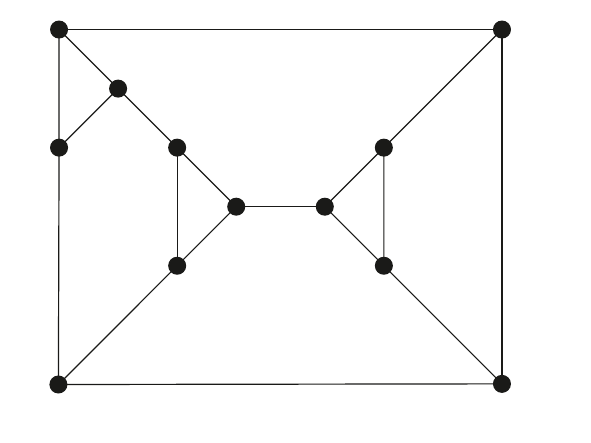}
    \caption{The Frucht graph, an example of a graph which is austere.}
    \label{frucht}
    \end{figure}
\begin{proof}[Proof of Theorem B] For an austere graph $\Gamma = (V,E)$, the only well-defined LS generators of $\autraag$ are the inversions and the partial conjugations. Let $n = |V|$. Note that each partial conjugation is an inner automorphism. We have the decomposition $$\autraag \cong \inn(\raag) \rtimes I_\Gamma,$$ where $I_\Gamma \cong (\Z /2)^n$ is the group generated by the inversions. The inversions act on $\inn(\raag) \cong \raag$ in the obvious way, either inverting or fixing (conjugation by) each $v \in V$. We have $\mathrm{Out}(\raag) \cong I_\Gamma$, and so $\mathrm{Aut(Out}(\raag)) \cong \mathrm{Out(Out}(\raag)) \cong \mathrm{GL}(n, \Z / 2).$ If we can find austere graphs for which $n$ is as large as we like, then we will have proved Theorem B.

The Frucht graph, seen in Figure \ref{frucht}, was constructed by Frucht \cite{Frucht} as an example of a 3-regular graph with trivial symmetry group. In fact, it is easily checked that the Frucht graph is austere. Baron-Imrich \cite{BaronImrich} generalised the Frucht graph to produce a family of finite, 3-regular graphs with trivial symmetry groups, over which $n = |V|$ is unbounded. Like the Frucht graph, these graphs may also be shown to be austere, and so they define a class of right-angled Artin groups which proves Theorem B. \end{proof}

\section{Proof of Theorem A: right-angled Artin groups with non-trivial centre}
In this section, we assume that $\raag$ has non-trivial centre. Let $\{\Gamma_i\}$ be a collection of graphs. The \emph{join}, $\join \{\Gamma_i\}$, of $\{\Gamma_i\}$ is the graph obtained from the disjoint union of $\{\Gamma_i\}$ by adding an edge $(v_i,v_j)$ for all vertices $v_i$ of $\Gamma_i$ and $v_j$ of $\Gamma_j$, for all $i \neq j$. Observe that for a finite collection of finite simplicial graphs $\{\Gamma_i \}$, we have $$A_{\join \{\Gamma_i\}} \cong \prod_i A_{\Gamma_i}.$$ When we take the join of only two graphs, $\Gamma$ and $\Delta$, we write $\join(\Gamma,\Delta)$ for their join.

\subsection{Decomposing $\autraag$} A vertex $s \in V$ is said to be \emph{social} if it is adjacent to every vertex of $V \setminus \{s\}$. Let $S$ denote the set of social vertices of $\Gamma$ and set $k = |S|$. Let $\Delta = \Gamma \setminus S$. We have $\Gamma = \join(S,\Delta)$, so $\raag \cong \Z^k \times \raagdelt$, and by \emph{The Centralizer Theorem} of Servatius \cite{Servatius}, the centre of $\raag$ is $A_S = \Z^k$.

No vertex $v \in \Delta$ can dominate any vertex of $S$ (otherwise $v$ would be social), and any $\phi \in \aut(\Gamma)$ must preserve $S$ and $\Delta$ as sets. Determining the LS generators, we see that $\autraag$ has $\glk \times \autraagdelt$ as a proper subgroup. The only LS generators not contained in this proper subgroup are of the form $\tau_{sa}$, where $s \in S$ and $a \in \Delta$. Note that this dominated transvection is defined for any pair $(s,a) \in S \times \Delta$. We will refer to this type of transvection as a \emph{lateral transvection}, as they occur `between' the two graphs, $S$ and $\Delta$.

\begin{prop}Let $\Gamma = \join(S,\Delta)$ define a right-angled Artin group, $\raag$, with non-trivial centre. The group $\mathcal{L}$ generated by the lateral transvections is isomorphic to $\zkdelt$. \end{prop}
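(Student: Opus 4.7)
The plan is to exploit that each social vertex $s \in S$ is central in $\raag$: since $s$ is adjacent to every other vertex, it commutes with every generator of $\raag$. So the lateral transvection $\tau_{sa}$ multiplies $a$ by a central element, which should force the family $\{\tau_{sa}\}$ to commute pairwise. Once this is established, there is a surjection $\zkdelt \twoheadrightarrow \mathcal{L}$ sending the standard basis vector $e_{(s,a)}$ to $\tau_{sa}$, and it remains to prove this map has trivial kernel. The argument is essentially routine once centrality of $S$ is invoked; the only subtle points are the $a=b$ commutativity check and the observation that $\langle S \rangle$ is genuinely free abelian of rank $k$.

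To verify commutativity, I would compute both composites $\tau_{sa}\tau_{tb}$ and $\tau_{tb}\tau_{sa}$ on each generator of $\raag$. Any generator other than $a,b$ is fixed by both. If $a \neq b$, the two transvections modify disjoint generators and clearly commute. If $a = b$, then $\tau_{sa}\tau_{ta}(a) = \tau_{sa}(at) = (as)t$ while $\tau_{ta}\tau_{sa}(a) = \tau_{ta}(as) = (at)s$; these agree because $s, t \in S$ are central and hence commute with one another. This establishes abelianness of $\mathcal{L}$ and yields the surjection above.

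For injectivity, suppose $\prod_{s,a} \tau_{sa}^{n_{sa}} = \mathrm{id}$ in $\mathcal{L}$. Fix $a \in \Delta$ and evaluate the composite on $a$: transvections $\tau_{s,a'}$ with $a' \neq a$ fix $a$, while the remaining factors collectively send $a$ to $a \cdot \prod_{s \in S} s^{n_{sa}}$, the ordering being immaterial by centrality. Setting this equal to $a$ forces $\prod_s s^{n_{sa}} = 1$ in $\raag$. Since any two social vertices are adjacent (each is adjacent to every other vertex of $V$), $S$ spans a complete subgraph, so $\langle S \rangle \cong \Z^k$ is a free abelian subgroup of $\raag$. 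Hence all $n_{sa} = 0$, and as $a \in \Delta$ was arbitrary, every exponent vanishes, giving $\mathcal{L} \cong \zkdelt$.
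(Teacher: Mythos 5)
Your argument is correct and follows essentially the same route as the paper: commutativity of the lateral transvections via the centrality of the social vertices, followed by identifying $\mathcal{L}$ as free abelian of rank $k|\Delta|$. In fact, your evaluation-on-$a$ argument spells out the linear-independence step that the paper dismisses as ``a straightforward calculation,'' so nothing is missing.
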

\begin{proof}It is clear the lateral transvections $\tau_{sa}$ and $\tau_{tb}$ commute if $a \neq b$. The only case left to check is $\tau_{sa}$ and $\tau_{ta}$, for $s,t \in S$ and $a \in \Delta$. We see that
$$\tau_{ta}\tau_{sa}\tau_{ta}^{-1}(a) = \tau_{ta}\tau_{sa}(at^{-1}) = \tau_{ta}(ast^{-1}) =atst^{-1} = as,$$ since $s$ and $t$ commute. Therefore $\tau_{ta} \tau_{sa} \tau_{ta}^{-1} = \tau_{sa}$, and hence $\mathcal{L}$ is abelian. That it has no torsion follows from the fact that $\Z^k$ has no torsion. A straightforward calculation verifies that the lateral transvections form a $\Z$-basis for $\mathcal{L}$. To deduce the rank, observe there is a bijection between $\{\tau_{sa} \mid S \in S, a \in \Delta \}$ and $S \times \Delta$.  \end{proof}
 We now show that $\mathcal{L}$ is the kernel of a semi-direct product decomposition of $\autraag$. This is an $\autraag$ version of a decomposition of $\out(\raag)$ given by Charney-Vogtmann \cite{CharneyVogtmann}.
\begin{prop}\label{splitdecomp}Let $\Gamma = \join(S,\Delta)$ define a right-angled Artin group, $\raag$, with non-trivial centre. The group $\autraag$ splits as the product \[ \zkdelt \rtimes \left [ \glk \times \autraagdelt \right ] .\] \end{prop}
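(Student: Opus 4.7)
The plan is to classify the LS generators of $\autraag$ relative to the join decomposition $\Gamma = \join(S,\Delta)$ and show that the lateral transvections generate a normal subgroup $\mathcal{L} \cong \zkdelt$ complemented by $\glk \times \autraagdelt$. Since the previous proposition already establishes $\mathcal{L} \cong \zkdelt$, three things remain: (a) realising $\glk \times \autraagdelt$ as a subgroup of $\autraag$, (b) verifying that $\mathcal{L}$ is normal in $\autraag$, and (c) showing $\mathcal{L} \cap [\glk \times \autraagdelt] = \{1\}$.

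For (a), I will classify the LS generators with respect to the vertex partition $V = S \sqcup \Delta$. Because $\st(s) = \Gamma$ for every social $s$, every partial conjugation $\gamma_{c,D}$ must have $c \in \Delta$ and $D \subseteq \Delta$, so it lies in $\autraagdelt$. Since being social is graph-theoretic, every graph symmetry of $\Gamma$ restricts to a permutation of $S$ (an element of $\glk$) and a symmetry of $\Delta$ (an element of $\autraagdelt$). No vertex of $\Delta$ dominates a social vertex, so dominated transvections split into three disjoint families: $\tau_{st}$ with $s, t \in S$ (which together with the inversions of $S$ generate $\glk$), $\tau_{ab}$ with $a, b \in \Delta$, and the lateral $\tau_{sa}$. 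Thus every non-lateral LS generator lies in $\glk \times \autraagdelt$, and together with $\mathcal{L}$ these generate all of $\autraag$.

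For (b), it will suffice to conjugate a single lateral transvection $\tau_{sa}$ by each non-lateral LS generator and verify the result lies in $\mathcal{L}$. The uniform tool is that every social vertex commutes with every vertex of $V$: inversions will send $\tau_{sa}$ to $\tau_{sa}^{\pm 1}$; graph symmetries will act by $\phi \tau_{sa}\phi^{-1} = \tau_{\phi(s),\phi(a)}$; a partial conjugation $\gamma_{c,D}$ will commute with $\tau_{sa}$ because $s$ commutes with $c$; and transvections inside $\Delta$ will commute with $\tau_{sa}$ after a short calculation exploiting the same commutativity. The one conjugation producing a nontrivial product in $\mathcal{L}$ is by $\tau_{ts}$ with $s, t \in S$, where a direct computation yields $\tau_{ts}\tau_{sa}\tau_{ts}^{-1} = \tau_{sa}\tau_{ta}$. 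The complete list of such identities is routine but numerous; I would defer the full case analysis to the Appendix.

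For (c), the action on generators will settle things: any $\phi \in \mathcal{L}$ fixes $S$ pointwise and sends each $a \in \Delta$ to $a$ times a word in $A_S \cong \Z^k$, whereas any $\phi \in \glk \times \autraagdelt$ respects the splitting $\raag \cong \Z^k \times \raagdelt$ and sends each $a$ to a word in $\raagdelt$ alone; these two constraints together force $\phi = 1$. Combining (a), (b), and (c) yields the semidirect product decomposition. The main obstacle is (b): although each individual identity is elementary, tracking the interaction of every non-lateral LS generator with every lateral transvection, together with the convention $\tau_{xy}(y) = yx$, creates the volume of casework that motivates a separate Appendix.
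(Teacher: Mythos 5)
Your proposal follows essentially the same route as the paper: identify every non-lateral LS generator as lying in $\glk \times \autraagdelt$, verify that $\mathcal{L}$ is normal by conjugating a lateral transvection by each LS generator (with the casework deferred to an appendix, exactly as the paper does in Table \ref{ltconj}), and note the trivial intersection via how each factor acts on the vertex generators. One identity in your sketch of step (b) is wrong, though: transvections inside $\Delta$ do \emph{not} all commute with $\tau_{sa}$. With the convention $\tau_{xy}(y)=yx$, conjugating $\tau_{sa}$ by $\tau_{ab}$ (which sends $b \mapsto ba$) gives $\tau_{ab}\tau_{sa}\tau_{ab}^{-1} = \tau_{sa}\tau_{sb}^{-1}$, since $\tau_{ab}^{-1}(b)=ba^{-1}$ is sent by $\tau_{sa}$ to $bs^{-1}a^{-1}$ and then back to $bs^{-1}$; this is precisely the entry $\tau_{sa}-\tau_{sb}$ in the paper's table, so your claim that the only nontrivial conjugation is by $\tau_{ts}$ with $s,t \in S$ is also incorrect. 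The slip is not fatal: the conjugate still lies in $\mathcal{L}$, so normality and the splitting $\zkdelt \rtimes \left[ \glk \times \autraagdelt \right]$ go through unchanged once the deferred case analysis is done correctly.
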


\begin{proof}
Standard computations show that $\mathcal{L} \cong \zkdelt$ is closed under conjugation by the LS generators: these calculations are summarised in the Appendix.  We observe that the intersection of $\mathcal{L}$ and $\glk \times \autraagdelt$ is trivial: the elements of $\mathcal{L}$ transvect vertices of $\Delta$ by vertices of $S$, whereas the elements of $\glk \times \autraagdelt$ carry $\Z^k$ and $\raagdelt$ back into themselves. Thus, $\autraag$ splits as in the statement of the proposition.
\end{proof}

We look to the $\zkdelt$ kernel as a source of automorphisms of $\autraag$. We must however ensure that the semi-direct product action is preserved; this is achieved using the theory of automorphisms of semi-direct products, which we now recall.

\textbf{Automorphisms of semi-direct products.} Let $G = N \rtimes H$ be a semi-direct product, where $N$ is abelian, with the action of $H$ on $N$ being encoded by a homomorphism $\alpha : H \to \mathrm{Aut}(N)$, writing $h \mapsto \alpha_h$. We will often write $(n,h) \in G$ simply as $nh$. Let $\mathrm{Aut}(G,N) \leq \mathrm{Aut}(G)$ be the subgroup of automorphisms which preserve $N$ as a set. For each $\gamma \in \mathrm{Aut}(G,N)$, we get an induced automorphism $\phi$, say, of $G / N$, and an automorphism $\theta$, say, of $N$, by restriction. The map $P : \mathrm{Aut}(G,N) \to \mathrm{Aut}(N) \times \mathrm{Aut}(H)$ given by $P(\gamma) = (\theta, \phi)$ is a homomorphism.

An element $(\theta, \phi) \in \aut(N)\times \aut(H)$ is said to be a \emph{compatible pair} if $\theta \alpha_h \theta^{-1} = \alpha_{\phi(h)},$ for all $h \in H$. Let $C \leq \aut(N)\times \aut(H)$ be the subgroup of all compatible pairs. This is a special (split, abelian kernel) case of the notion of compatibility for group extensions \cite{Passi}, \cite{Wells}. Notice that the image of $P$ is contained in $C$, since $\gamma \in \mathrm{Aut}(G,N)$ must preserve the relation $hnh^{-1} = \alpha_h(n)$ for all $h \in H, n \in N$. We therefore restrict the codomain of $P$ to $C$. Note that while $P$ (with its new codomain) is surjective, it need not be injective: consider, for example, any automorphism of $\Z \times \Z$ that preserves one copy of $\Z$ but not the other. We map $C$ back into $\aut(G,N)$ using the homomorphism $R$, defined by \[R(\theta,\phi)(nh) = \theta(n)\phi(h). \]

Let $\mathrm{Aut}_H(G,N)$ be the subgroup of $\mathrm{Aut}(G,N)$ of maps which induce the identity on $H$. This group is mapped via $P$ onto $$C_1 := \{\theta \in \mathrm{Aut}(N) \mid \theta \alpha(h) \theta^{-1} = \alpha(h) \phantom{\phi} \forall h \in H \}.$$
Note $C_1$ is the centraliser of $\mathrm{im}(\alpha)$ in $\mathrm{Aut}(N)$. We determine $C_1$ for the semi-direct product decomposition of $\autraag$ given by Proposition \ref{splitdecomp}, and use $R$ to map $C_1$ into $\aut(\autraag)$.

\subsection{Ordering the lateral transvections} In order to determine the image of $\alpha$ for our semi-direct product, $\zkdelt \rtimes \left [ \glk \times \autraagdelt \right ]$, we specify an ordering on the lateral transvections. Let $s_1 \leq \ldots \leq s_k$ be a total order on the vertices of $S$. For lateral transvections $\tau_{s_ia}, \tau_{s_jb}$, we say $\tau_{s_ia} \leq \tau_{s_jb}$ if $s_i \leq s_j$. For a fixed $i$, we refer to the set $\{ \tau_{s_ia} \mid a \in \Delta \}$ as a \emph{$\Delta$-block}.

We now use properties of the graph $\Delta$ to determine the rest of the ordering on the lateral transvections. Recall that for vertices $x,y \in V$, $x$ dominates $y$ if $\lk(y) \subseteq \st(x)$, and we write $y \leq x$. Charney-Vogtmann \cite{CharneyVogtmann} show that $\leq$ is a pre-order (that is, a reflexive, transitive relation) on $V$, and use it to define the following equivalence relation. Let $v,w \in V$. We say $v$ and $w$ are \emph{domination equivalent} if $v \leq w$ and $w \leq v$. If this is the case, we write $v \sim w$, and let $[v]$ denote the domination equivalence class of $v$.

The pre-order on $V$ descends to a partial order on $V / \sim$. We also denote this partial order by $\leq$. The group $\mathrm{Aut}(\Delta)$ acts on the set of domination classes of $\Delta$. Let $\mathcal{O}$ be the set of orbits of this action, writing $\mathcal{O}_{[v]}$ for the orbit of the class $[v]$. We wish to define a partial order $\ll$ on $\mathcal{O}$ which respects the partial order on the domination classes. That is, if $[v] \leq [w]$, then $\mathcal{O}_{[v]} \ll \mathcal{O}_{[w]}$, for domination classes $[v]$ and $[w]$.

We achieve this by defining a relation $\ll$ on $\mathcal{O}$ by the rule $\mathcal{O}_{[v]} \ll \mathcal{O}_{[w]}$ if and only if there exists $[w'] \in \mathcal{O}_{[w]}$ such that $[v] \leq [w']$. This is well-defined, since $\aut(\Delta)$ acts transitively on each $\mathcal{O}_{[v]} \in \mathcal{O}$. The properties of $\leq$ discussed above give us the following proposition.
\begin{prop}The relation $\ll$ on $\mathcal{O}$ is a partial order. \end{prop}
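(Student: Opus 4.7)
The plan is to verify the three axioms of a partial order: reflexivity, transitivity, and antisymmetry. Throughout, the key structural input is that every $\phi \in \aut(\Delta)$ preserves the link and star of each vertex, hence preserves the pre-order $\leq$ on $V$ and the induced partial order $\leq$ on $V/\sim$; in particular $\aut(\Delta)$ permutes $\mathcal{O}$. Reflexivity is immediate: take $[w'] = [v] \in \mathcal{O}_{[v]}$ and apply reflexivity of $\leq$.

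For transitivity, suppose $\mathcal{O}_{[u]} \ll \mathcal{O}_{[v]}$ and $\mathcal{O}_{[v]} \ll \mathcal{O}_{[w]}$. By definition there exist $[v'] \in \mathcal{O}_{[v]}$ with $[u] \leq [v']$ and $[w'] \in \mathcal{O}_{[w]}$ with $[v] \leq [w']$. Choose $\phi \in \aut(\Delta)$ sending $[v]$ to $[v']$; since $\phi$ preserves $\leq$, applying $\phi$ to $[v] \leq [w']$ yields $[v'] \leq \phi([w'])$, and $\phi([w']) \in \mathcal{O}_{[w]}$. Chaining gives $[u] \leq [v'] \leq \phi([w'])$, so $\mathcal{O}_{[u]} \ll \mathcal{O}_{[w]}$.

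The main obstacle is antisymmetry, since one must rule out that two distinct orbits each contain a representative dominated by a representative of the other. I would reduce this to the following auxiliary fact: if $[x],[y] \in V/\sim$ lie in the same $\aut(\Delta)$-orbit and $[x] \leq [y]$, then $[x] = [y]$. To see this, pick $\sigma \in \aut(\Delta)$ with $\sigma([x]) = [y]$; since $\sigma$ preserves $\leq$, iteration gives the chain
\[
[x] \leq \sigma([x]) \leq \sigma^2([x]) \leq \cdots
\]
and finiteness of $\Delta$ (hence of the orbit) produces some $n \geq 1$ with $\sigma^n([x]) = [x]$. The partial-order property of $\leq$ on $V/\sim$ then forces every inequality in the chain to be an equality, so $[x] = \sigma([x]) = [y]$.

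Given this fact, antisymmetry follows cleanly. Assume $\mathcal{O}_{[v]} \ll \mathcal{O}_{[w]}$ and $\mathcal{O}_{[w]} \ll \mathcal{O}_{[v]}$, witnessed by $[v] \leq [w']$ with $[w'] \in \mathcal{O}_{[w]}$ and $[w] \leq [v']$ with $[v'] \in \mathcal{O}_{[v]}$. Choose $\phi \in \aut(\Delta)$ with $\phi([v]) = [v']$ and set $[w''] := \phi([w']) \in \mathcal{O}_{[w]}$. Applying $\phi$ to $[v] \leq [w']$ yields $[v'] \leq [w'']$, so
\[
[w] \leq [v'] \leq [w''].
\]
Since $[w]$ and $[w'']$ lie in the same orbit $\mathcal{O}_{[w]}$, the auxiliary fact gives $[w] = [w'']$, hence $[w] = [v']$, and thus $[w] \in \mathcal{O}_{[v]}$, i.e.\ $\mathcal{O}_{[v]} = \mathcal{O}_{[w]}$. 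This completes the proof.
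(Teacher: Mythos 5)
Your proof is correct, and it reaches the crux of the matter --- antisymmetry of $\ll$ --- by a genuinely different route from the paper. The paper's argument is graph-theoretic: it uses the monotone invariant $|\st(\cdot)|$, noting that $[v] \leq [w]$ forces $|\st(v)| \leq |\st(w)|$ (a small case analysis on whether $v$ and $w$ are adjacent), that equality of star sizes together with $[v]\leq[w]$ forces $[v]=[w]$, and that graph automorphisms preserve star sizes, so two orbits dominating each other must consist of classes with equal star sizes and hence coincide. You instead prove the purely order-theoretic auxiliary fact that within a single $\aut(\Delta)$-orbit the relation $\leq$ is trivial: choosing $\sigma$ with $\sigma([x])=[y]$ and $[x]\leq[y]$, you iterate $\sigma$, use finiteness to close the cycle $\sigma^n([x])=[x]$, and invoke antisymmetry of the partial order on $V/\sim$ to collapse the chain. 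Your argument needs only that $\aut(\Delta)$ is a finite group acting on the finite poset $V/\sim$ by order-preserving bijections, so it is more abstract and would apply verbatim to any such action; the paper's argument is shorter once the star-size monotonicity is granted and keeps the reasoning concretely tied to the graph $\Delta$. Both proofs handle reflexivity and transitivity the same way, via the transitive action of $\aut(\Delta)$ on each orbit, and both implicitly rely on the fact (stated just before the proposition) that automorphisms preserve the domination pre-order, so no gap there.
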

\begin{proof}We utilise the transitive action of $\aut(\Delta)$ on each $\mathcal{O}_{[v]} \in \mathcal{O}$. The only work lies in establishing the anti-symmetry of $\ll$. This can be achieved by noting that if $[v] \leq [w]$, then $|\st(v)| \leq |\st(w)|$, and if $[v] \leq [w]$ with $|\st(v)| = |\st(w)|$ then $[v]=[w]$.\end{proof}
We use $\ll$ to define a total order on the vertices of $\Delta$, by first extending $\ll$ to a total order on $\mathcal{O}$. We also place total orders on the domination classes within each $\mathcal{O}_{[v]} \in \mathcal{O}$, and on the vertices within each domination class. Now each vertex is relabelled $T(p,q,r)$ to indicate its place in the order: $T(p,q,r)$ is the $r$th vertex of the $q$th domination class of the $p$th orbit. When working with a given $\Delta$-block, we can identify the lateral transvections with the vertices of $\Delta$, allowing us to think of $T(p,q,r)$ as a lateral transvection. Thus, we may think of a specific $\Delta$-block as inheriting an order from the ordering on $\Delta$.

\textbf{The centraliser of the image of $\mathbf{\alpha}$.} We now explicitly determine the image of $\alpha$, and its centraliser, in $\glkdelt$. Looking at how $\glk \times \autraagdelt$ acts on $\zkdelt$ (see the Appendix), we see that the image of $\alpha$ is \[Q := \glk \times \Phi_\Delta,\] where $\Phi_\Delta \leq \gldelt$ is the image of $\autraagdelt$ under the homomorphism induced by abelianising $A_\Delta$. The action of $Q$ on $\zkdelt$ factors through $\mathrm{GL}(k+|\Delta|, \Z)$ via the canonical map $\autraag \to \mathrm{GL}(k+|\Delta|)$. Working in $\mathrm{GL}(k+|\Delta|, \Z)$ instead of $\glkdelt$ is simpler (as pointed out to us by an anonymous referee), however it does not allow us to fully determine the group $C_1$, as the following does.

The matrices in $Q$ have a natural block decomposition given by the $\Delta$-blocks: each $M \in Q$ may be partitioned into $k$ horizontal blocks and $k$ vertical blocks, each of which has size $|\Delta| \times |\Delta|$. We write $M = (A_{ij})$, where $A_{ij}$ is the block entry in the $i$th row and $j$th column. Under this decomposition, we see that the $\glk$ factor of $Q$ is embedded as $$\glk \cong \{ (a_{ij} \cdot I_{|\Delta|}) \mid (a_{ij}) \in \glk \},$$ where $I_{|\Delta|}$ is the identity matrix in $\gldelt$. We write $\mathrm{Diag}(D_1, \ldots, D_k)$ to denote the block diagonal matrix $(B_{ij})$ where $B_{ii} = D_i$ and $B_{ij} = 0$ if $i \neq j$. The $\Phi_\Delta$ factor of $Q$ embeds as $$ \Phi_\Delta \cong \{ \mathrm{Diag}(M, \ldots , M) \mid M \in \Phi_\Delta \} \leq Q.$$

We now determine the centraliser, $C(Q)$, of $Q$ in $\glkdelt$. The proof is similar to the standard computation of $Z(\glk)$.
\begin{lemma}The centraliser $C(Q)$ is a subgroup of $\{ \mathrm{Diag}(M, \ldots , M) \mid M \in \gldelt \}.$
\end{lemma}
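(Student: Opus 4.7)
My plan is to mimic, blockwise, the standard proof that $Z(\glk)$ consists of scalar matrices. Since the embedded subgroup $\{(a_{ij} \cdot I_{|\Delta|}) : (a_{ij}) \in \glk\}$ sits inside $Q$, any element $X \in C(Q)$ must commute in particular with every matrix of the form $(a_{ij} \cdot I_{|\Delta|})$. It therefore suffices to show that this weaker commutation condition already forces $X$ to be block diagonal with all diagonal blocks equal.

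Write $X = (A_{ij})$ in the $\Delta$-block decomposition, with each $A_{ij}$ a $|\Delta| \times |\Delta|$ integer matrix. For each ordered pair $i \neq j$ in $\{1,\ldots,k\}$, I will test $X$ against the block matrix $E'_{ij}$ which has $I_{|\Delta|}$ in every diagonal block position and in the $(i,j)$ block position, and $0$ elsewhere. This $E'_{ij}$ is the image of $I + E_{ij} \in \glk$ under the block-scalar embedding, so it is invertible (its inverse is the image of $I - E_{ij}$) and it lies in the embedded copy of $\glk$ inside $Q$. A direct block computation shows that $X E'_{ij}$ differs from $X$ only in its $j$th block column, which becomes $(A_{pj} + A_{pi})_{p}$, while $E'_{ij} X$ differs from $X$ only in its $i$th block row, which becomes $(A_{iq} + A_{jq})_{q}$. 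Equating the two and cancelling $X$ yields exactly
\[ A_{pi} = 0 \text{ for } p \neq i, \qquad A_{jq} = 0 \text{ for } q \neq j, \qquad A_{ii} = A_{jj}. \]
Letting $i$ and $j$ range over all distinct pairs, the first two identities kill every off-diagonal block of $X$, and the third forces all diagonal blocks to coincide in some common matrix $M$. Invertibility of $X$ then implies $M \in \gldelt$, so $X = \mathrm{Diag}(M, \ldots, M)$, which is the desired conclusion.

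The argument is essentially routine block-matrix bookkeeping, and I do not anticipate a real obstacle. The only point requiring even minimal care is verifying that each $E'_{ij}$ genuinely lies in the $\glk$-subgroup against which commutation is being imposed, which is immediate from the explicit formula for its inverse. I note that I do not need to use the $\Phi_\Delta$ factor of $Q$ at all: commutation with the $\glk$-factor alone already suffices for this containment.
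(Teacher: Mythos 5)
Your proof is correct and is essentially the paper's argument: both reduce to commutation with the block-scalar embedded copy of $\glk$ inside $Q$ and run the standard computation of $Z(\glk)$ blockwise. The only cosmetic difference is that the paper first tests against the sign-diagonal matrices $(\epsilon_{ij} \cdot I_{|\Delta|})$ to kill the off-diagonal blocks and then against elementary matrices to equate the diagonal blocks, whereas you extract both conclusions at once from the block elementary matrices coming from $I + E_{ij}$.
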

\begin{proof}Clearly an element of $C(Q)$ must centralise the $\glk$ factor of $Q$. Let $D$ be the subgroup of diagonal matrices in $\glk$, and define \[ \hat D := \{ (\epsilon_{ij} \cdot I_{|\Delta|}) \mid (\epsilon_{ij}) \in D \} \leq Q. \] Suppose $(A_{ij}) \in C(Q)$ centralises $\hat D$. Then for each $(\epsilon_{ij} \cdot I_{|\Delta|}) \in \hat D$, we must have \[ (A_{ij}) = (\epsilon_{ij} \cdot I_{|\Delta|}) (A_{ij}) (\epsilon_{ij} \cdot I_{|\Delta|}) = (\epsilon_{ii}\epsilon_{jj}A_{ij}),\] since $(\epsilon_{ij} \cdot I_{|\Delta|})$ is block diagonal. Since $\epsilon_{ii} \in \{-1,1\}$ for $1 \leq i \leq k$, we must have $A_{ij} = 0$ if $i \neq j$, so $(A_{ij})$ is block diagonal. By considering which block diagonal matrices centralise $(E_{ij} \cdot I_{|\Delta|})$, where $(E_{ij}) \in \glk$ is an elementary matrix, we see that any block diagonal matrix centralising the $\glk$ factor of $Q$ must have the \emph{same} matrix $M \in \gldelt$ in each diagonal block. It is then a standard calculation to verify that any choice of $M \in \gldelt$ will centralise the $\glk$ factor of $Q$.
\end{proof}

The problem of determining $C(Q)$ has therefore been reduced to determining the centraliser of $\Phi_\Delta$ in $\gldelt$. The total order we specified on the vertices of $\Delta$ gives a block lower triangular decomposition of $M \in \Phi_\Delta$, which we utilise in the proof of Proposition \ref{centraliser}. This builds upon a matrix decomposition given by Day \cite{Day} and Wade \cite{Wade}.

Observe that $\Phi_\Delta$ contains the diagonal matrices of $\gldelt$. As in the above proof, anything centralising $\Phi_\Delta$ must be a diagonal matrix. For a diagonal matrix $E \in \gldelt$, we write $E(p,q,r)$ for the diagonal entry corresponding to the vertex $T(p,q,r)$ of $\Delta$.
\begin{prop}\label{centraliser}A diagonal matrix $E \in \gldelt$ centralises $\Phi_\Delta$ if and only if the following conditions hold: \begin{description}\item[(1)]If $p = p'$, then $E(p,q,r) = E(p',q',r')$, and,
\item[(2)] If $T(p,q,r)$ is dominated by $T(p',q',r')$, then $E(p,q,r) = E(p',q',r')$ \end{description}
\end{prop}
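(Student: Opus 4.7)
The plan is to verify the characterisation by testing commutation of $E$ against a finite generating set for $\Phi_\Delta$. Since $\Phi_\Delta$ is the image of $\autraagdelt$ under the abelianisation $\autraagdelt \to \gldelt$, such a generating set is furnished by the images of the LS generators: sign matrices from inversions (which $E$ commutes with automatically, being diagonal), permutation matrices $P_\phi$ for $\phi \in \aut(\Delta)$, elementary matrices $I + E_{xy}$ from dominated transvections $\tau_{xy}$ with $y \leq x$ in $\Delta$, and the identity matrix from partial conjugations. The whole proof therefore reduces to analysing commutation of $E$ with the $P_\phi$ and the elementary matrices.

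For necessity, I would first carry out the standard calculation $E(I + E_{xy}) = (I + E_{xy})E$, which forces $E(x) = E(y)$ whenever $y \leq x$; this is exactly condition $(2)$. For condition $(1)$, given $T(p,q,r)$ and $T(p',q',r')$ with $p = p'$, I would pick $\phi \in \aut(\Delta)$ with $\phi([T(p,q,r)]) = [T(p',q',r')]$, invoke commutation with $P_\phi$ to deduce $E(T(p,q,r)) = E(\phi(T(p,q,r)))$, and then apply condition $(2)$ — valid because any two vertices in a common domination class dominate each other — to pass from $\phi(T(p,q,r))$ to $T(p',q',r')$ within their shared class.

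For sufficiency, I would verify commutation against each listed generator. The elementary matrix case is immediate from $(2)$. For a permutation matrix $P_\phi$, commutation is equivalent to $E(v) = E(\phi(v))$ for every vertex $v$; since $\phi$ carries $[v]$ into $\mathcal{O}_{[v]}$ by the definition of the orbit, the vertices $v$ and $\phi(v)$ share their $p$-index, so condition $(1)$ delivers the required equality. The sign-matrix and identity cases are vacuous.

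The main obstacle I anticipate is the necessity half of condition $(1)$: the permutation-matrix relation alone only equates $E$-values along a single $\phi$-orbit of vertices, so one cannot directly read off equality between $T(p,q,r)$ and an arbitrary $T(p',q',r')$ in the same $\mathcal{O}$-orbit. Sequencing is essential — condition $(2)$ must be established first, then used inside the argument for $(1)$ to bridge across the domination class containing $\phi(T(p,q,r))$. The remaining ingredients (the elementary-matrix computation and commutation with a permutation matrix) are entirely routine.
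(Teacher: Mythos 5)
Your proof is correct, but the sufficiency half takes a genuinely different (and more elementary) route than the paper. You check that a diagonal $E$ satisfying (1) and (2) commutes with the images of the LS generators of $\autraagdelt$ — sign matrices, permutation matrices $P_\phi$, elementary matrices from dominated transvections, and the identity from partial conjugations — which suffices because centralising a generating set centralises the group it generates. The paper instead works with an arbitrary $M \in \Phi_\Delta$, using the block lower-triangular decomposition of $\Phi_\Delta$ relative to the ordering of the $\aut(\Delta)$-orbits (building on Wade's matrix decomposition): there $E$ is block-scalar, $EM$ and $ME$ are compared blockwise, and the key structural input is that a nonzero off-diagonal block forces a domination relation between the two orbits, whence condition (2) gives equality of the corresponding signs. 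Your argument buys independence from that structural fact about general elements of $\Phi_\Delta$, at the cost of not exhibiting the triangular form that the paper's ordering was set up to provide. The necessity direction is essentially identical in both treatments — elementary matrices witness (2), permutation matrices from the transitive $\aut(\Delta)$-action on each orbit witness (1) — and your explicit sequencing (prove (2) first, then use mutual domination within the image class to bridge from $\phi(T(p,q,r))$ to $T(p',q',r')$) is a slightly more careful rendering of the step the paper compresses into a choice of class representatives.
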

\begin{proof}We define a block decomposition of the matrices in $\gldelt$ using the sizes of the orbits, $\mathcal{O}_{[v_1]} \ll \ldots \ll \mathcal{O}_{[v_l]}$. Let $m_i = |\mathcal{O}_{[v_i]}|$. We partition $M \in \gldelt$ into $l$ horizontal blocks and $l$ vertical blocks, writing $M = (M_{ij})$, where $M_{ij}$ is an $m_i \times m_j$ matrix. Observe that due to the ordering on the lateral transvections, if $i < j$, then $M_{ij} = 0$.

Let $E \in \gldelt$ satisfy the conditions in the statement of the proposition. We may write $E = \mathrm{Diag}(\epsilon_1 \cdot I_{m_1 \times m_1}, \ldots , \epsilon_l \cdot I_{m_l \times m_l}),$ where each $\epsilon_i \in \{-1,1\}$ ($1 \leq i \leq l$). Then $EM = (\epsilon_i \cdot M_{ij})$ and $ME = (\epsilon_j \cdot M_{ij}).$ We see that $ME$ and $EM$ agree on the diagonal blocks, and on the blocks where $M_{ij} = 0$. If $i > j$ and $M_{ij} \neq 0$, then there must be a vertex $T(j,q,r)$ being dominated by a vertex $T(i,q',r')$. By assumption, $\epsilon_i = \epsilon_j$. Therefore $EM = ME$ and $E \in C(Q)$.

Suppose now that $E \in \gldelt$ fails the first condition. Without loss of generality, suppose $E(p,q,1) \neq E(p,q',1)$. Since, by definition, $\Aut(\Delta)$ acts transitively on the elements of $\mathcal{O}_{[v_p]}$, there is some $P \in \gldelt$ induced by some $\phi \in \aut(\Delta)$ which acts by exchanging the $q$th and $q'$th domination classes. A standard calculation shows that $[E,P] \neq 1$.

Finally, suppose $E \in \gldelt$ fails the second condition. Assume that $T(p,q,r)$ is dominated by $T(p',q',r')$, but that $E(p,q,r) \neq E(p',q',r')$. In this case, $E$ fails to centralise the elementary matrix which is the result of transvecting $T(p,q,r)$ by $T(p',q',r')$.
\end{proof}

\textbf{Extending elements of $C(Q)$ to automorphisms of $\autraag$.} Using the map $R$ from section 3.1, for $A \in C(Q) = C_1$ we obtain $R(A) \in \aut(\autraag)$ which acts as $A$ on $\zkdelt$ and as the identity on $\glk \times \autraagdelt$. Note that $R(A)$ acts on $\autraag$ by inverting some collection of lateral transvections: the group $R(C_1)$ is hence a direct sum of finitely many copies of $\Z / 2$.If there are $d$ domination classes in $\Delta$, then $|C_1| \leq 2^d$. We now determine $\hat{R}(C_1)$, the image of $R(C_1)$ in $\out(\autraag)$.

Let $nh \in \zkdelt \rtimes \left [ \glk \times \autraagdelt \right ]$, with $h \neq 1$. Conjugating $\autraag$ by $nh$ fixes $\glk \times \autraagdelt$ pointwise only if $h$ is central in $\glk \times \autraagdelt$. The only such non-trivial central element is $\iota$, the automorphism inverting each generator of $\mathbb{Z}^k$ (see Proposition \ref{centres}). Given that $\alpha_\iota(n) = -n$ for each $n \in \zkdelt$, we see that for any $m \in \zkdelt$, we have $(m, 1)^{(n, \iota)} =(-m,1)$.

So, regardless of which $n$ we choose, conjugation by $n\iota$ is equal to $R(-I_{k|\Delta|})$. In other words, when we conjugate by $n \iota$, we map each lateral transvection to its inverse. Thus, for $A,B \in C_1$, $R(AB^{-1})$ is inner if and only if $A(p,q,r) = - B(p,q,r)$ for every $p,q,$ and $r$. This means $|R(C_1)| = 2 |\hat{R}(C_1)|$.

\textbf{First proof of Theorem A.} We are now able to prove Theorem A for right-angled Artin groups with non-trivial centre.
\begin{proof}[Proof (1) of Theorem A] By Proposition \ref{splitdecomp}, we have a semi-direct product decomposition of $\autraag$, whose kernel is $\zkdelt$. The structure of $C_1 = C(Q)$ is given by Proposition \ref{centraliser}. We have fewest constraints on $C_1$ if $\Delta$ is such that domination occurs only between vertices in the same domination class, and when each domination class lies in an $\Aut(\Delta)$-orbit by itself. This is achieved, for example, if $\Delta = X$, a disjoint union of pairwise non-isomorphic complete graphs, each of rank at least two. Suppose $X$ has $d$ connected components. For $A \in C(Q)$, Proposition \ref{centraliser} implies $A$ is entirely determined by the entries $A(p,1,1)$ ($1 \leq p \leq d$). This gives $|C(Q)| = 2^d$, and so the image of $C(Q)$ in $\out(\autraag)$ has order $2^{d-1}$. As we may choose $d$ to be as large as we like, the result follows.
\end{proof}
\section{Proof of Theorem A: centreless right-angled Artin groups}
In this section, we demonstrate that Theorem A also holds for classes of centreless right-angled Artin groups. From now on, we assume that the graph $\Gamma$ has no social vertices, so that $\raag$, has trivial centre. A simplicial graph $\Gamma = (V,E)$ is said to have \emph{no separating intersection of links (`no SILs')} if for all $v, w \in V$ with $v$ not adjacent to $w$, each connected component of $\Gamma \setminus \left ( \lk(v) \cap \lk(w) \right)$ contains either $v$ or $w$. We have the following theorem. \begin{theorem}[Charney-Ruane-Stambaugh-Vijayan \cite{Charneyetal}] \label{nosils}Let $\Gamma$ be a finite simplicial graph with no SILs. Then $\pc(\raag)$, the subgroup of $\autraag$ generated by partial conjugations, is a right-angled Artin group, whose defining graph has vertices in bijection with the partial conjugations of $\raag$. \end{theorem}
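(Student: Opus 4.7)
The plan is to produce a candidate right-angled Artin group and show that the obvious surjection onto $\pc(\raag)$ is an isomorphism. Let $\Lambda$ be the graph whose vertex set is the (finite) set of partial conjugations $\gamma_{c,D}$ of $\raag$, with an edge between $\gamma_{c,D}$ and $\gamma_{c',D'}$ whenever these two automorphisms commute in $\autraag$. Since commuting generators in $\Lambda$ map to commuting elements of $\pc(\raag)$, there is a natural surjection $\Phi : A_\Lambda \twoheadrightarrow \pc(\raag)$. The theorem amounts to the assertion that, under the no-SIL hypothesis, $\Phi$ is injective.

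My first step is a case analysis of commutation. Given $\gamma_{c,D}$ and $\gamma_{c',D'}$, one classifies the pair according to whether $c = c'$, whether $c$ and $c'$ are adjacent, and whether $c \in D'$ or $c' \in D$ or neither. The commuting cases split cleanly: if $c = c'$ and $D \neq D'$ the supports are disjoint; if $c$ and $c'$ are adjacent the conjugating letters commute; if only one of $c,c'$ lies in the other's support, then a direct computation on each generator of $\raag$ still gives commutation. The no-SIL hypothesis is exactly what rules out the remaining dangerous configuration: non-adjacent $c, c'$ together with a component $D$ of $\Gamma \setminus \st(c)$ which is also a component of $\Gamma \setminus \st(c')$ but contains neither $c$ nor $c'$. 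In that configuration the commutator $[\gamma_{c,D},\gamma_{c',D}]$ produces the honest conjugation by $[c,c']$ on $D$, which is a non-trivial automorphism of $\raag$ that cannot be expressed as a product of commuting partial conjugations — this is the obstruction no-SIL removes.

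For injectivity of $\Phi$, my plan is to combine a normal form in $A_\Lambda$ with a faithful action. Given a reduced word $w \in A_\Lambda$ (in, say, Servatius / piling normal form), I would argue by induction on syllable length that $\Phi(w)$ is a non-trivial automorphism of $\raag$. Concretely, choose a leftmost letter $\gamma_{c,D}$ in $w$ and a test vertex $d \in D$; track the image of $d$ under $\Phi(w)$ using the classification of commutations from the previous step, showing that the prefixes of $c$'s and $c^{-1}$'s accumulated cannot cancel because the letters involved genuinely differ in $\raag$. An alternative, more geometric route is to let $\pc(\raag)$ act on the universal cover of the Salvetti complex of $\raag$ and identify the hyperplane stabilisers corresponding to commuting sets of partial conjugations with the standard parabolic subgroups of $A_\Lambda$; one then invokes a standard criterion (for example, Hsu–Wise style cubulation) that a group generated by partial conjugations whose only relations are the visible commutations is a right-angled Artin group.

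The main obstacle is this injectivity step, and inside it the real content is verifying that no-SIL suffices to suppress all non-commutation relations. The delicate point is that products of several non-commuting partial conjugations can, a priori, act as a partial conjugation by a nontrivial word in $c$'s — the no-SIL hypothesis has to be deployed repeatedly to contradict such a collapse, typically by producing a vertex witnessing the separating intersection that the hypothesis forbids. Everything else — the case analysis of commutation and the construction of $A_\Lambda$ — is bookkeeping once the right invariant (either a normal form or a geometric action) is in place.
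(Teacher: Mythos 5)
A preliminary remark: the paper does not prove this statement at all — it is imported from Charney--Ruane--Stambaugh--Vijayan \cite{Charneyetal} and used as a black box — so your proposal must be measured against the cited proof, which is a substantive argument rather than bookkeeping. Measured that way, the central gap is that the entire content of the theorem, the injectivity of $\Phi : A_\Lambda \to \pc(\raag)$, is left as a plan: you name two possible strategies (induction on syllable length while tracking a test vertex, or a geometric/cubulation argument via an action on the Salvetti complex), carry out neither, and yourself flag this step as the main obstacle. Neither sketch is routine. Tracking a single test vertex $d \in D$ does not obviously work, because later letters of the word act on the conjugating prefix already accumulated (partial conjugations move each other's conjugating letters), so ``the prefixes cannot cancel'' is precisely the statement to be proved, not an observation; and the geometric route presupposes identifying $\pc(\raag)$ with a group acting suitably on a CAT(0) cube complex with prescribed stabilisers, which is essentially the conclusion of the theorem.

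The commutation classification on which everything else is to rest is also asserted rather than proved, and it is false as stated. Take $c \neq c'$ non-adjacent with $c' \in D$ and $c \notin D'$, and suppose there is a vertex $x \in D' \setminus D$. Then $\gamma_{c,D}\gamma_{c',D'}(x) = (cc'c^{-1})\,x\,(cc'c^{-1})^{-1}$ while $\gamma_{c',D'}\gamma_{c,D}(x) = c'xc'^{-1}$, and these differ because the non-trivial element $c'^{-1}cc'c^{-1}$ has support $\{c,c'\}$ and $x \notin \st(c')$, so it cannot centralise $x$. Hence ``only one of $c,c'$ lies in the other's support'' yields commutation only when $D' \subseteq D$; if you believe the bad configuration cannot occur in a no-SIL graph, that is an extra lemma which the proposal neither states nor proves. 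Moreover, the shared-component configuration is not ``the remaining dangerous configuration'': when each of $c,c'$ lies in the other's support the pair also fails to commute (as it must in general, since $\pc(\raag)$ contains $\inn(\raag)$, which is non-abelian for instance for the no-SIL graphs $\Gamma_E$ of Section 4), and partially overlapping supports with neither containment give further non-commuting pairs. So the edge set of your graph $\Lambda$ — and therefore the group $A_\Lambda$ being compared with $\pc(\raag)$ — is not yet correctly pinned down, and the no-SIL hypothesis has to control a wider list of configurations than the one you isolate. Until both the classification of commuting pairs and the injectivity of $\Phi$ are actually established, this is an outline of where a proof might go rather than a proof.
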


We restrict ourselves to looking at certain no SILs graphs, to obtain a nice decomposition of $\autraag$. We say a graph $\Gamma$ is \emph{weakly austere} if it has trivial symmetry group and no dominated vertices. Note that this is a loosening of the definition of an austere graph: removing a vertex star need no longer leave the graph connected.
\begin{lemma}\label{sils} Let $\Gamma=(V,E)$ be weakly austere and have no SILs. For $c \in V$, let $K_c = |\pi_0(\Gamma \setminus \st(c))|$. Then \[ |\out(\autraag)| \geq 2^{K_c-1}. \] \end{lemma}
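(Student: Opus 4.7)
The plan is to exploit the semidirect product decomposition $\autraag \cong \pc(\raag) \rtimes I_\Gamma$ that the hypotheses on $\Gamma$ afford us. Since $\Gamma$ is austere with star cuts, the only LS generators of $\autraag$ are inversions and partial conjugations, and by Theorem \ref{nosils}, $\pc(\raag)$ is itself a right-angled Artin group. I would first verify that $\pc(\raag)$ is normal in $\autraag$ (each inversion conjugates $\gamma_{c,D}$ into $\gamma_{c,D}^{\pm 1}$) and that $\pc(\raag) \cap I_\Gamma = 1$ (products of partial conjugations preserve the conjugacy class of each generator of $\raag$, whereas nontrivial products of inversions do not). A short direct computation then shows the action $\alpha \colon I_\Gamma \to \aut(\pc(\raag))$ is given by $\alpha_{\iota_v}(\gamma_{c,D}) = \gamma_{c,D}^{-1}$ if $v = c$ and $\alpha_{\iota_v}(\gamma_{c,D}) = \gamma_{c,D}$ otherwise.

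Fix $c \in V$ and label the components of $\Gamma \setminus \st(c)$ by $D_1,\ldots,D_{K_c}$. For each subset $S \subseteq \{1,\ldots,K_c\}$, I would define $\theta_S \in \aut(\pc(\raag))$ by inverting $\gamma_{c,D_i}$ for $i \in S$ and fixing every other partial conjugation; this is an automorphism because $\pc(\raag)$ is a RAAG and $\theta_S$ is a product of inversion-type LS generators of $\pc(\raag)$. Both $\theta_S$ and each $\alpha_{\iota_v}$ act diagonally by $\pm 1$ on the generators of $\pc(\raag)$ and hence commute, so the formula $\Phi_S(nh) = \theta_S(n)h$ defines an automorphism $\Phi_S \in \aut(\autraag)$ (the non-abelian analogue of the compatibility construction of Section 3).

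The crux is counting how many outer classes the $2^{K_c}$ maps $\Phi_S$ represent. Since $\Phi_S \Phi_{S'}^{-1} = \Phi_{S \triangle S'}$, it suffices to show $\Phi_T$ is inner if and only if $T = \emptyset$ or $T = \{1,\ldots,K_c\}$, which will pair up the $\Phi_S$ into $2^{K_c-1}$ outer classes $\{S,S^c\}$ and give the bound. For the easy direction, conjugation by $\iota_c$ inverts every $\gamma_{c,D_i}$ while commuting with every other LS generator, so it realises $\Phi_{\{1,\ldots,K_c\}}$, and $\Phi_\emptyset$ is the identity. For the converse, suppose $\Phi_T$ is conjugation by some $g = nh$ with $n \in \pc(\raag)$ and $h \in I_\Gamma$; restriction to $\pc(\raag)$ yields the relation $\mathrm{conj}_n \circ \alpha_h = \theta_T$, and passing to the abelianization $\pc(\raag)^{\mathrm{ab}}$ (on which $\mathrm{conj}_n$ acts trivially) reduces this to $\alpha_h = \theta_T$. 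Since $\alpha_h$ acts uniformly on all partial conjugations with a given center, while $\theta_T$ is selective on those centered at $c$, this forces $T = \emptyset$ or $T = \{1,\ldots,K_c\}$. I expect this final innerness analysis to be the main obstacle, with the abelianization step doing the real work of ruling out exotic inner automorphisms arising from the $\pc(\raag)$-part of $g$.
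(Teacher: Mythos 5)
Your proposal is correct and follows essentially the same route as the paper: the decomposition $\aut(A_\Gamma) \cong \pc(A_\Gamma) \rtimes I_\Gamma$, extending inversions of subsets of $\{\gamma_{c,D_i}\}$ by the identity on $I_\Gamma$, and showing such an extension is inner precisely when the subset is empty or all of $\{1,\ldots,K_c\}$, giving $2^{K_c-1}$ outer classes. Your abelianization step is just a cleaner packaging of the paper's exponent-sum argument, and your explicit observation that conjugation by $\iota_c$ realises the ``invert everything'' automorphism makes the counting direction precise.
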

\begin{proof}Since $\Gamma$ is weakly austere, the only LS generators which are defined are the inversions and the partial conjugations. Letting $I_\Gamma$ denote the finite subgroup generated by the inversions $\iota_v$ ($v \in V$), we obtain the decomposition \[ \aut(\raag) \cong \pc(\raag) \rtimes I_\Gamma, \] where the inversions act by inverting partial conjugations in the obvious way. Since $\Gamma$ has no SILs, it follows from Theorem \ref{nosils} that $\pc(\raag) \cong A_\Delta$ for some simplicial graph $\Delta$ whose vertices are in bijection with the partial conjugations of $\raag$.

Fix $c \in V$ and let $\{ \gamma_{c,D_i} \mid 1 \leq i \leq K_c \}$ be the set of partial conjugations by $c$. Let $\eta_{c,j}$ be the LS generator of $\aut(A_\Delta)$ which inverts $\gamma_{c,D_j}$, but fixes the other vertex-generators of $A_\Delta$. This extends to an automorphism of $\aut(\raag)$, by specifying that $I_\Gamma$ is fixed pointwise: all that needs to be checked is that the action of $I_\Gamma$ on $\pc(\raag)$ is preserved, which is a straightforward calculation. We abuse notation, and write $\eta_{c,j} \in \aut(\autraag)$.

If $K_c > 1$, we see $\eta_{c,j}$ is not inner. Assume $\eta_{c,j}$ is equal to conjugation by $p \kappa \in \pc(\raag) \rtimes I_\Gamma$. For $\gamma \in \pc(\raag)$, we have $(\gamma,1)^{(p,\kappa)} = (p \gamma^\kappa p^{-1},1)$. Since $\eta_{c,j}(\gamma_{c,D_j}) = {\gamma_{c,D_j}}^{-1}$, an exponent sum argument tells us that $\kappa$ must act by inverting $\gamma_{c,D_j}$, and so $\kappa$ must invert $c$ in $\raag$. However, $\eta_{c,j}$ fixes $\gamma_{c,D_i}$ for all $i \neq j$, by definition, and a similar exponent sum argument implies that $\kappa$ \emph{cannot} invert $c$ in $\raag$. Thus, by contradiction, $\eta_{c,j}$ cannot be inner.

As above, we may choose a subset of $\{ \gamma_{c,D_i} \}$ to invert, and extend this to an automorphism of $\aut(\raag)$. Take two distinct such automorphisms, $\eta_1$ and $\eta_2$. Their difference $\eta_1 \eta_2^{-1}$ is inner if and only if it inverts \emph{every} element of $\{ \gamma_{c,D_i} \}$. Otherwise, we would get the same contradiction as before. A counting argument gives the desired lower bound of $2^{K_c -1}$.
\end{proof}

Observe that if $\Gamma$ is austere, we cannot find a vertex $c$ with $K_c >1$. This is the reason we loosen the definition and consider weakly austere graphs.

\textbf{Second proof of Theorem A.} By exhibiting an infinite family of graphs over which the size of $| \{ \gamma_{c,D_i} \} |$ is unbounded, applying Lemma \ref{sils} will give a second proof of Theorem A.

\begin{proof}[Proof (2) of Theorem A] Fix $t \in \Z$ with $t \geq 3$. Define $e_0 = 0$ and choose $\{e_1 < \ldots < e_t \} \subset \Z^+$ subject to the conditions:
\begin{description}
\item[(1)] For each $0 < i \leq t$, we have $e_{i} - e_{i-1} > 2$, and
\item[(2)] If $i \neq j$, then $e_{i} - e_{i-1} \neq e_{j} - e_{j-1}$.
\end{description}
We use the set $E:= \{e_i\}$ to construct a graph. Begin with a cycle on $e_t$ vertices, labelled $0, 1, \ldots, e_t-1$ in the natural way. Join one extra vertex, labelled $c$, to those labelled $e_i$, for $0 \leq i < t$. We denote the resulting graph by $\Gamma_E$. Figure \ref{spokes} shows an example of such a $\Gamma_E$. \begin{figure}[h!]
    \centering
    \includegraphics[width=2.6in]{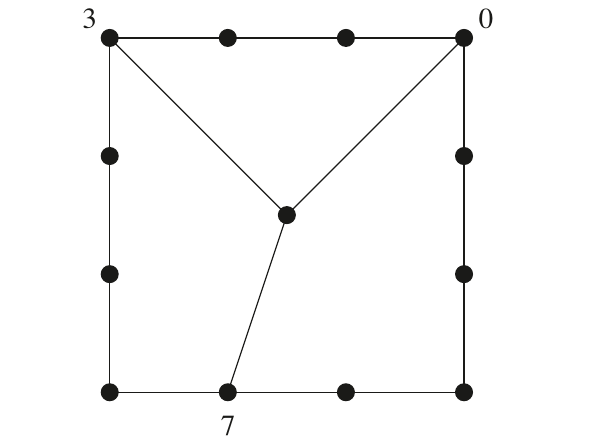}
    \caption{The graph $\Gamma_E$, for $E = \{3,7,12\}$.}
    \label{spokes}
    \end{figure}

For $E \subset \Z^+$ satisfying the above conditions, we see that $\Gamma_E$ is weakly austere and has no SILs. Condition (1) ensures that no vertex is dominated by another. Observe that $c$ is fixed by any $\phi \in \aut(\Gamma_E)$. Since each connected component of $\Gamma \setminus \st(c)$ has $e_{i} - e_{i-1} -1$ elements (for some $1 \leq i \leq t$), condition (2) implies that $\aut(\Gamma_E) = 1$. To see that $\Gamma_E$ has no SILs, observe that the intersection of the links of any two vertices has order at most 1. When a single vertex is removed, $\Gamma_E$ remains connected, and so it has no SILs.

Lemma \ref{sils} applied to the family of graphs $\{ \Gamma_E \}$ proves the theorem. \end{proof}

\section{Extremal behaviour and generalisations}In Sections 3 and 4, we gave examples of $\raag$ for which $\out (\autraag)$ was non-trivial, but not necessarily infinite. Currently, there are very few known $\raag$ for which $\out(\autraag)$ exhibits `extremal behaviour', that is, $\raag$ for which $\out (\autraag))$ is trivial or infinite. In this final section, we discuss the possibility of such behaviour, and generalisations of the current work to automorphism towers.

 \textbf{Complete automorphisms groups.} Recall that a group $G$ is said to be complete if it has trivial centre and every automorphism of $G$ is inner.  Our proofs of Theorems A and B relied upon us being able to exhibit large families of right-angled Artin groups whose automorphisms groups are not complete. It is worth noting that if $\raag$ is not free abelian, then $\autraag$ has trivial centre, and so \emph{a priori}, $\autraag$ \emph{could} be complete.

\begin{prop}\label{centres}Let $A_\Gamma$ be a right-angled Artin group. Then $Z(\mathrm{Aut}(A_\Gamma))$ has order at most two. In particular, if $A_\Gamma$ is not free abelian, then $\mathrm{Aut}(A_\Gamma)$ is centreless. \end{prop}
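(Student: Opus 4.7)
The plan is to exploit the structure of $\raag$ given by Servatius's Centralizer Theorem (recalled in Section 3), combined with commutation against inner automorphisms, inversions, and lateral transvections, to show that any $\sigma \in Z(\aut(\raag))$ must be trivial unless $\raag$ is free abelian.

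First, I would let $\sigma \in Z(\aut(\raag))$ and observe that $\sigma$ commutes with every inner automorphism $c_g$, so $c_{\sigma(g)} = c_g$ for all $g \in \raag$, which means $\sigma(g)g^{-1} \in Z(\raag)$. By Servatius's Centralizer Theorem, the centre of $\raag$ is the free abelian subgroup $\langle S \rangle \cong \Z^k$ generated by the social vertices $S$, and $\raag \cong \Z^k \times \raagdelt$ with $\Delta = \Gamma \setminus S$ (and $\raagdelt$ centreless, since $\Delta$ inherits no social vertices from $\Gamma$). Hence for each vertex $v \in V$ we may write $\sigma(v) = v \cdot z_v$ with $z_v \in \langle S\rangle$.

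Next I would pin down the correction terms $z_v$ using the inversion generators $\iota_s$ for $s \in S$. Commuting $\sigma$ with $\iota_s$ and evaluating on $v$: if $v \notin S$, one obtains $\iota_s(z_v) = z_v$ for every social $s$, and since $\langle S\rangle$ is free abelian with $\iota_s$ negating the $s$-coordinate, this forces $z_v = 1$, hence $\sigma(v) = v$. If $v = s \in S$, the same commutation produces $\iota_s(z_s) = z_s^{-1}$, which in $\langle S \rangle$ forces $z_s$ to lie in the cyclic subgroup $\langle s \rangle$, giving $\sigma(s) = s^{1+a_s}$ for some $a_s \in \Z$. Since $\sigma$ restricts to an automorphism of $Z(\raag) = \Z^k$ via the diagonal matrix with entries $1 + a_{s_i}$, each $1 + a_{s_i}$ must be a unit in $\Z$, so $\sigma(s) \in \{s, s^{-1}\}$ for every social $s$.

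Finally, assume $\raag$ is not free abelian, so $\Delta$ is non-empty; pick any $y \in V(\Delta)$. For every $s \in S$ we have $\lk(y) \subseteq V = \st(s)$, so the lateral transvection $\tau_{sy}$ is an LS generator. Writing $\sigma(s) = s^{\epsilon_s}$ with $\epsilon_s \in \{\pm 1\}$, comparing $\sigma\tau_{sy}(y) = \sigma(y)\sigma(s) = y \cdot s^{\epsilon_s}$ against $\tau_{sy}\sigma(y) = \tau_{sy}(y) = ys$ forces $\epsilon_s = 1$ for every $s \in S$. Combined with the previous paragraph, $\sigma$ fixes every vertex, so $\sigma = \mathrm{id}$ and $Z(\aut(\raag))$ is trivial. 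In the remaining free abelian case, $\aut(\raag) = \gln$ and $Z(\gln) = \{\pm I\}$, of order at most two, yielding the bound in all cases.

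The main obstacle is the inversion calculation that pins $z_s$ down to a power of $s$ alone; once that is done, the lateral transvection step is the crucial geometric observation: lateral transvections mix the free abelian factor with $\raagdelt$, so any non-trivial sign choice on a social vertex is detected by transvecting it onto a non-social vertex.
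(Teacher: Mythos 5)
Your proof is correct and follows essentially the same route as the paper's: commuting a central automorphism with inner automorphisms reduces it to one acting only through the centre $\Z^k = \langle S \rangle$, lateral transvections then rule out any non-trivial sign on the social vertices, and the free abelian case is handled by $Z(\GL(k,\Z)) = \{\pm I\}$. Your explicit inversion computation pinning down the correction terms $z_v$ carefully justifies a step the paper states tersely (``it must fix every element of $A_\Delta$'', which a priori only holds modulo the centre), but this is a refinement of the same argument rather than a different approach.
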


\begin{proof}For brevity of proof, we assume that $\raag \cong \Z^k \times \raagdelt$, taking $k=0$, and $\Z^k = 1$ if $\raag$ is centreless. If $A_\Gamma$ is free abelian of rank $k$, then $Z(\mathrm{Aut}(A_\Gamma)) \cong Z(\mathrm{GL}(k, \mathbb{Z})) \cong \Z / 2$. From now on, we assume the centre of $\raag$ is proper.

We now adapt the standard proof that a centreless group has centreless automorphism group. Suppose that $\phi \in \mathrm{Aut}(A_\Gamma)$ is central. We know that $\inn(\raag) \cong \raag / \Z^k \cong \raagdelt$. For any $\gamma_w \in \inn(\raag)$, we must have $\gamma_w = \phi \gamma_w \phi^{-1} = \gamma_{\phi(w)}$. So, for $\phi$ to be central, it must fix every element of $\raagdelt$. Observe that if $k = 0$, then $\phi$ must be trivial, and we are done.

Assume now that $k \geq 1$. For any $\phi \in \mathrm{Aut}(A_\Gamma)$, we also have $\phi(u) \in \Z^k$, for all $u \in \Z^k$. So, a central $\phi$ must simply be an element of $\mathrm{GL}(k, \mathbb{Z})$, since it must be the identity on $A_\Delta$, and take $\mathbb{Z}^k$ into itself.

In particular, we have that $Z(\mathrm{Aut}(A_\Gamma)) \leq Z ( \mathrm{GL}(k, \mathbb{Z})) = \{ 1, \iota \}$, where $\iota$ is the automorphism inverting each generator of $\Z^k$. However, lateral transvections are not centralised by $\iota$, and so the centre of $\autraag$ is trivial.
\end{proof}
In this paper, we have focused on finding right-angled Artin groups whose automorphism groups are not complete: an equally interesting question is which right-angled Artin groups \emph{do} have complete automorphism groups, beyond the obvious examples of ones built out of direct products of free groups. In an earlier version of this paper, we conjectured that when $\Gamma$ is austere, the group $\autraag$ would be complete. However, Corey Bregman has since constructed non-trivial members of $\out(\autraag)$ of order 2 when $\Gamma$ is austere; this work will appear in a forthcoming paper.

It may be possible to adapt Bridson-Vogtmann's geometric proof \cite{BridsonVogtmann} of the completeness of $\out(F_n)$ to find examples of $\raag$ for which $\out (\raag)$ is complete, using Charney-Stambaugh-Vogtmann's newly developed outer space for right-angled Artin groups \cite{RAAGspace}.

\textbf{Infinite order automorphisms.} Theorem C showed that it is possible for $\out(\outraag)$ to be infinite, however the question of whether $\out(\autraag)$ can be infinite is still open. An obvious approach to this problem is to exhibit an element $\alpha \in \out(\autraag)$ of infinite order. The approach taken in Section 4, involving graphs $\Gamma$ with no SILs, might seem hopeful, as we certainly know of infinite order non-inner elements of $\aut(\pc (\raag))$: in particular, dominated transvections and partial conjugations. A key property that allowed us to extend $\eta_{c,j} \in \aut(\pc(\raag))$ to an element of $\aut(\autraag)$ was that it respected the natural partition of the partial conjugations by their conjugating vertex. More precisely, $\eta_{c,j}$ sent a partial conjugation by $v \in V$ to a string of partial conjugations, also by $v$. This ensured that the action of $I_\Gamma$ on $\pc(\raag)$ was preserved when we extended $\eta_{c,j}$ to be the identity on $I_\Gamma$.

It might be hoped that we could find a transvection $\tau \in \aut(\pc(\raag))$ which also respected this partition, as $\tau$ could then easily be extended to an infinite order element of $\aut(\autraag)$. However, it is not difficult to verify that whenever $\Gamma$ has no dominated vertices, as in Section 4, no such $\tau$ will be well-defined. Similarly, the only obvious way to extend a partial conjugation $\gamma \in \pc(\pc(\raag))$ is to an element of $\inn(\aut(\raag))$. This leads us to formulate the following open question.

\textbf{Question:} Does there exist a simplicial graph $\Gamma$ such that $\out(\autraag)$ is infinite?

It seems possible that such a $\Gamma$ could exist, however the methods used in this paper do not find one. Our main approach was to find elements of $\aut(\autraag)$ which preserve some nice decomposition of $\autraag$. To find infinite order elements of $\aut(\autraag)$, it may be fruitful, but more unwieldy, to loosen this constraint.

While it is possible to find groups $\autraag$ and $\outraag$ that contain finite index subgroups whose automorphism groups are infinite, it is a difficult problem in general to extend such automorphisms to induce members of $\out(\autraag)$ and $\out(\outraag)$. For example, for $\raag = F_2 \times F_2$, the four well-defined dominated transvections in $\autraag$ generate a finite index copy of $\raag$ inside $\outraag$. However, this copy of $F_2 \times F_2$ lies in $\out(F_2) \times \out(F_2) \leq \outraag$, and the interplay between the dominated transvections and the remaining LS generators prevents any infinite order automorphisms of the copy of $\raag = F_2 \times F_2$ extending in the obvious way. Indeed, $\out(F_2) = \mathrm{GL}(2, \Z)$, and Hua-Reiner \cite{HuaReiner} have already established that the outer automorphism group of this group is finite.

\textbf{Automorphism towers.} Let $G$ be a centreless group. Then $G$ embeds into its automorphism group, $\mathrm{Aut}(G)$, as the subgroup of inner automorphisms, $\mathrm{Inn}(G)$, and $\mathrm{Aut}(G)$ is also centreless. We inductively define $$\mathrm{Aut}^i(G) = \mathrm{Aut(Aut}^{i-1}(G))$$ for $i \geq 0$, with $\mathrm{Aut}^0(G) = G$. This yields the following chain of normal subgroups: \[ G \lhd \mathrm{Aut}(G) \lhd \mathrm{Aut(Aut}(G)) \lhd \ldots \lhd \mathrm{Aut}^i(G) \lhd \ldots, \] which we refer to as the \emph{automorphism tower of} $G$. This sequence of groups is extended transfinitely using direct limits in the obvious way. An automorphism tower is said to \emph{terminate} if there exists a group $A$ in the tower for which the embedding into the next group in the tower is an isomorphism. Observe that a complete group's automorphism tower terminates at the first step. Thomas \cite{Thomas} showed that any centreless group has a terminating automorphism tower, although it may not terminate after a finite number of steps. Hamkins \cite{Hamkins} showed that the automorphism tower of \emph{any} group terminates, although in the above definition, we have only considered automorphism towers of centreless groups.

\textbf{Problem:} Determine the automorphism tower of $\raag$ for an arbitrary $\Gamma$.

This seems a difficult problem in general. A first approach might be to find $\raag$ for which $\out(\autraag)$ is finite. It would then perhaps be easier to study the structure of $\aut^2(\raag)$.

\appendix
\section{Appendix: Conjugating the lateral transvections} Table 1 shows the conjugates of the lateral transvection $\tau_{sa}$ by members of a set $T$ that suffices to generate $\autraag$. We decompose any $\phi \in \aut(\Gamma)$ into its actions on $S$ and $\Delta$. \begin{table}[h!!!!!!]
\centering
\begin{tabular}{|c|c||c|c|} \hline
$\lambda \in T \cup T^{-1}$ & $\lambda \cdot \tau_{sa} \cdot \lambda^{-1}$ & $\lambda \in T \cup T^{-1}$ & $\lambda \cdot \tau_{sa} \cdot \lambda^{-1}$ \\
\hline $\iota_t$ & $\tau_{sa}$ & $\iota_b$ & $\tau_{sa}$ \\
$\iota_s$ &  $-\tau_{sa}$ & $\iota_a$ & $-\tau_{sa}$\\
$\tau_{st}$ & $\tau_{sa}$ & $\tau_{bd}$ & $\tau_{sa}$\\
$\tau_{rt}$ & $\tau_{sa}$ & $\tau_{ab}$ & $\tau_{sa} - \tau_{sb}$\\
$\tau_{ts}$ & $\tau_{sa} + \tau_{ta}$ & $\tau_{ab}^{-1}$ & $\tau_{sa} + \tau_{sb}$\\
$\tau_{ts}^{-1}$ & $\tau_{sa} - \tau_{ta}$ & $\phi \in \aut(\Delta)$ & $\tau_{s\phi(a)}$\\
& & $\gamma_{c,D}$ & $\tau_{sa}$\\ \hline
\end{tabular}
\caption{The conjugates of a lateral transvection $\tau_{sa}$. The vertices $a,b,d,r,s$ and $t$ are taken to be distinct, with $c \in \Delta$ and $D$ being any connected component of $\Gamma \setminus \st(c)$.}
\label{ltconj}
\end{table}


\bibliographystyle{mrl}
\bibliography{compbib}

\end{document}